\theoremstyle{plain}
\newtheorem{thm}{Theorem}[section]
\newtheorem{lemma}[thm]{Lemma}
\newtheorem{corollary}[thm]{Corollary}
\theoremstyle{definition}
\def\ca{\lower1.3em\hbox{\includegraphics{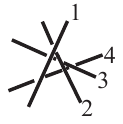}}}
\def\cb{\lower1.3em\hbox{\includegraphics{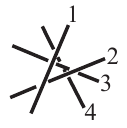}}}
\def\cd{\lower0.9em\hbox{\includegraphics{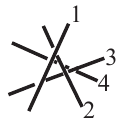}}}
\def\ce{\lower0.9em\hbox{\includegraphics{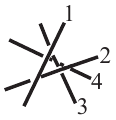}}}
\def\cf{\lower0.9em\hbox{\includegraphics{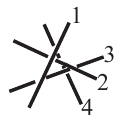}}}
\def\cg{\lower0.9em\hbox{\includegraphics{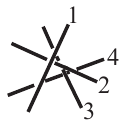}}}
\def\statea{\lower1.1em\hbox{\includegraphics{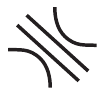}}}
\def\stateb{\lower1.1em\hbox{\includegraphics{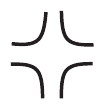}}}
\def\statec{\lower.9em\hbox{\includegraphics{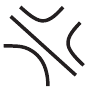}}}
\def\stated{\lower.9em\hbox{\includegraphics{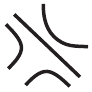}}}
\def\statee{\lower.9em\hbox{\includegraphics{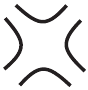}}}
\def\statef{\lower.9em\hbox{\includegraphics{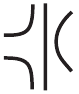}}}
\def\stateg{\lower.8em\hbox{\includegraphics{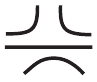}}}
\def\stateh{\lower.9em\hbox{\includegraphics{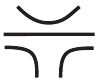}}}
\def\statei{\lower.8em\hbox{\includegraphics{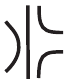}}}
\def\statej{\lower.9em\hbox{\includegraphics{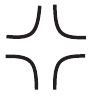}}}
\def\statek{\lower.9em\hbox{\includegraphics{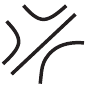}}}
\def\statel{\lower.9em\hbox{\includegraphics{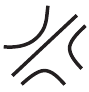}}}
\def\statem{\lower.8em\hbox{\includegraphics{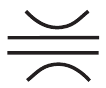}}}
\def\staten{\lower1.1em\hbox{\includegraphics{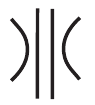}}}
\def\stateo{\lower.9em\hbox{\includegraphics{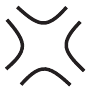}}}
\def\statep{\lower.9em\hbox{\includegraphics{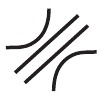}}}
\numberwithin{equation}{section}
\begin{document}
\title[Quadruple Crossing Number of Knots and Links]
{Quadruple Crossing Number of Knots and Links}

\date{\today}
\author[Colin Adams]{Colin Adams}
\address{Department of Mathematics and Statistics, Williams College, Williamstown, MA 01267}
\email{Colin.C.Adams@williams.edu}

\begin{abstract} A quadruple crossing is a crossing in a projection of a knot or link that has four strands of the knot passing straight through it.  A quadruple crossing projection is a projection such that all of the crossings are quadruple crossings. In a previous paper, it was proved that every knot and link has a quadruple crossing projection and hence, every knot has a minimal quadruple crossing number $c_4(K)$. In this paper, we investigate quadruple crossing number,  and in particular, use the span of the bracket polynomial to determine quadruple crossing number for a variety of knots and links.
\end{abstract}

\maketitle

\section{Introduction}\label{S:intro} In \cite{Adams}, a multi-crossing (also called an $n$-crossing) of a knot is defined to be a singularity in a projection such that n strands of the knot cross straight through the singular point.   In that paper, it was proved that for every fixed $n \geq 2$, any given knot or link has a projection such that all singularities are $n$-crossings. Hence, we can define $c_n(K)$  to be the minimal number of $n$-crossings in any such projection.  In that paper, triple crossing projections were considered, and $c_3(K)$ was determined for a variety of knots and links. 

In this paper, we continue the investigation, extending results to quadruple crossing projections of knots and links. For simplicity, define $q(K) = c_4(K)$. A quadruple crossing has strands at four heights, which we label top to bottom as 1,2,3,and 4 respectively. Reading clockwise around the crossing, and always starting with the top strand, there are six types of crossings, denoted $c_{1234}, c_{1243}, c_{1324}, c_{1342}, c_{1423}$,  and $c_{1432}$. Note that if we reflect the projection of one of these crossings in a line perpendicular to the strand labelled 1, we find $c_{1234}$ paired with $c_{1432}$, $c_{1243}$ paired with $c_{1342}$ and $c_{1324}$ paired with 
$c_{1423}$. This pairing will be helpful later.

\begin{figure}[h]
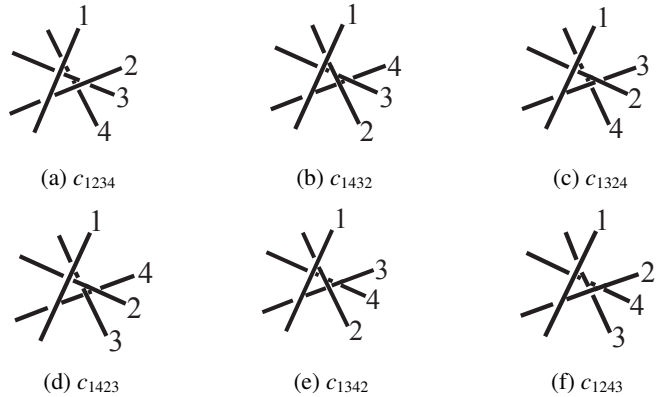

	\begin{subfigure}[b]{.2\textwidth}
		\centering
		{\includegraphics[height=20mm]{1234new.pdf}}
		\label{fig:1234}
		\caption{$c_{1234}$}
	\end{subfigure}
	\begin{subfigure}[b]{.2\textwidth}
		\centering
		{\includegraphics[height=20mm]{1432new.pdf}}
		\caption{$c_{1432}$}
		\label{fig:1432}
	\end{subfigure}
	 \begin{subfigure}[b]{.2\textwidth}
		\centering
		{\includegraphics[height=20mm]{1324new.pdf}}
		\label{fig:1324}
		\caption{$c_{1324}$}
	\end{subfigure} \\ \vspace{.5cm}
		\begin{subfigure}[b]{.2\textwidth}
		\centering
		{\includegraphics[height=20mm]{1423new.pdf}}
		\caption{$c_{1423}$}
		\label{fig:1423}
	\end{subfigure}\begin{subfigure}[b]{.2\textwidth}
		\centering
		{\includegraphics[height=20mm]{1342new.pdf}}
		\label{fig:1342}
		\caption{$c_{1342}$}
	\end{subfigure}	
	\begin{subfigure}[b]{.2\textwidth}
		\centering
		{\includegraphics[height=20mm]{1243new.pdf}}
		\caption{$c_{1243}$}
		\label{fig:1243}
	\end{subfigure}

	\vspace{.5cm}
	\caption{Quadruple crossings.}

\label{fig:quadcrossings}
\end{figure}
\vspace{.5cm}

Note that a singe quadruple crossing resolves into six double crossings, and hence, $q(K) \geq \frac{c(K)}{6}$ where $c(K)$ is the traditional crossing number. This bound is realized by a $(4k, 4)$-torus link $L$, which has four components, and is known to have crossing number $12k$.  Each meridianal half-twist is realized by a quadruple crossing, so $q(L) = 2k$. 

    In Section 2, we consider further the relation between $q(K)$ and $c(K)$.  In \cite{Adams}, it was proved that $c_3(K) \leq c(K) -2$ for all knots and links except 2-braid knots, and for 2-braid knots, $c_3(K) \leq c(K) -1.$ One might hope that $c(K) > c_3(K) > c_4(K)$, however, this is not the case. For example,  the figure-eight knot $J$ has $c_3(J)=c_4(J)=2.$ One might also hope that at least $c_n(K) \geq c_{n+1}(K)$ for all $k\geq 2$. At this point, we do not know this. We do know that $c_n(K) \geq c_{n+2}(K)$ for all $k\geq 2$. In Section 2, we  prove that $c_4(K) < c(K)$, the first proof of which was given by Michael Landry.  This section is independent of the subsequent sections.
     
 In Section 3, we relate the quadruple crossing number of a knot or link to the span of the bracket polynomial. In  \cite{Adams}, it was shown that $span(<K>) \leq 8c_3(K)$. Here, we show that span$(<K>) \leq 16 q(K)$. From this, it follows that if $K$ is an alternating knot or link, we obtain a stronger lower bound $q(K) \geq \frac{c(K)}{4}$. 
 
 In Section 4, after introducing some moves to turn a traditional projection into a quadruple crossing projection, we use this result to determine the quadruple crossing number of a variety of knots.We include a table of specific quadruple crossing number for many of the prime knots of 10 or fewer crossings.

Further investigations into multi-crossings appear in \cite{Ad}, where it was proved that every knot and link has a projection with just one multi-crossing.  The minimal $n$ for which there is a single $n$-crossing is called the \"ubercrossing number of the knot. One can further prove that for knots, there is such a projection that resembles a flower, called a petal projection, so that there are no nested loops leaving and re-entering the single multi-crossing. Hence, every knot has a petal number, which is the least $n$ for the single $n$-crossing of a petal projection. \"Ubercrossing number and petal number and bounds on them were determined for a variety of knots and links.

I would like to thank  B. DeMeo, M. Montee, S. Park, S. Venkatesh and F. Yhee for helpful conversations and especially Alex Lin, who determined the skein relations for quadruple crossings that appear below and Michael Landry, who first proved Theorem 2.1 below. 

\section{Upper bound on quadruple crossing number}

In this section, we prove that $q(K) < c(K)$ for all nontrivial knots and links. We first note that $q(K) \leq c(K)$. This was proved in \cite{Adams}, where the operation as in Figure \ref{quadruple} allows one to turn every double crossing into a quadruple crossing. The obvious generalization immediately implies $c_n(K) \geq c_{n+2}(K)$ for all $k\geq 2$. 

\begin{figure}[h]
\begin{center}
\includegraphics[scale=0.7]{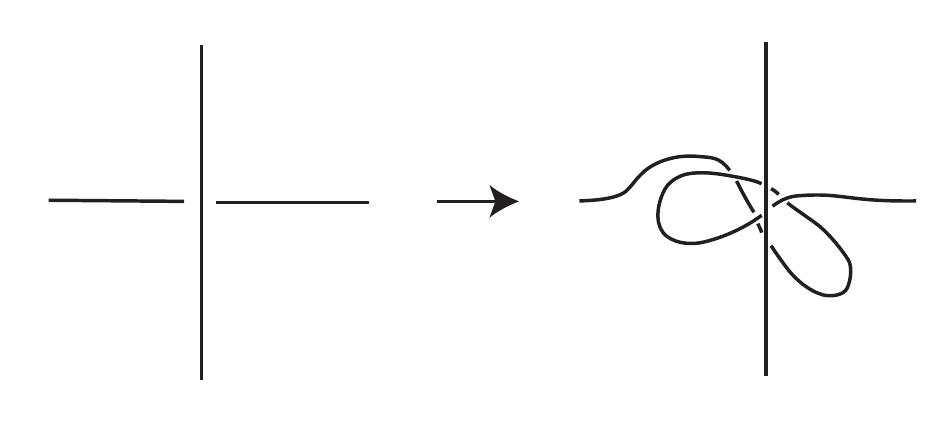}
\caption{Turning a double crossing into a quadruple crossing.}
\label{quadruple}
\end{center}
\end{figure}

\medskip

 We  note the following generalization of folding, the original version of which was also introduced in \cite{Adams}. There, a closed non-self-intersecting loop in the projection plane was called a {\em crossing covering circle} if it intersected the knot projection only in crossings, and at each crossing, there were two strands of the knot coming out to each side of the loop. We call the length of the circle the number of crossings it passes through. Given such a circle that crosses an even number of crossings, we can perform a quadruple folding, as in Figure \ref{quadfolding}. First we take the overstrand at one of the crossings hit by the crossing covering circle and we stretch it around the circle as in Figure \ref{quadfolding}(b), eliminating one crossing in the process and turning the other crossings on the circle into triple crossings. Then we take the second strand making up the original crossing, and we stretch it around the circle also. But as we do so we push it alternately inside and outside the first stretched strand, switching as we pass through each crossing. We thus obtain a projection where the $n$ double crossings that were intersected by the crossing covering circle have now become $n-1$ quadruple crossings.

\begin{figure}[h]
\begin{center}
\includegraphics[scale=0.7]{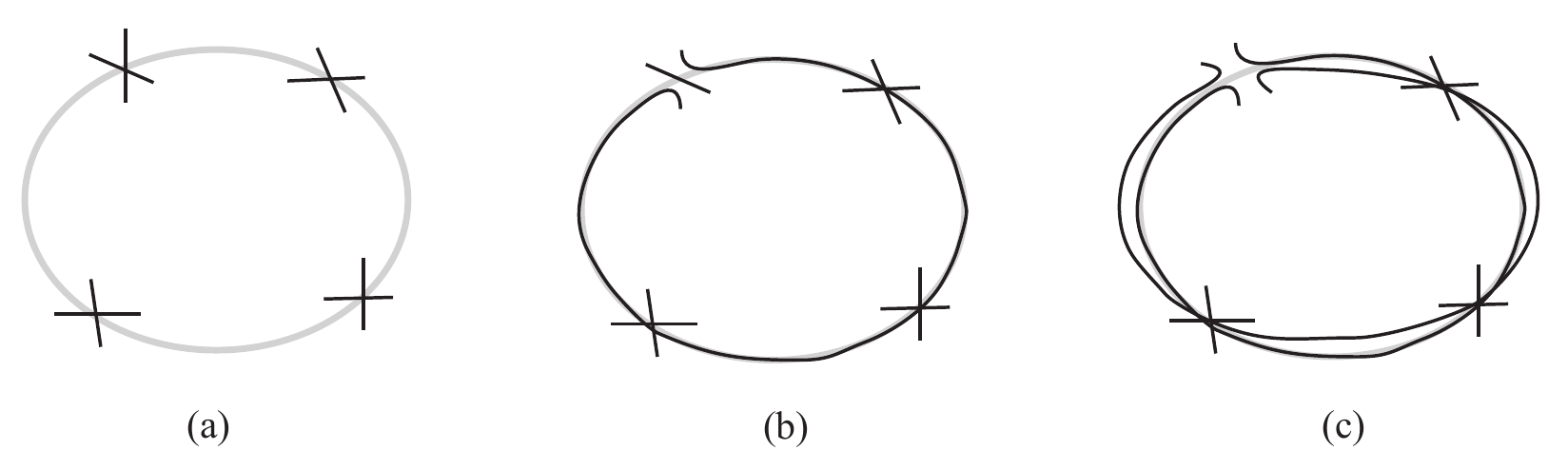}
\caption{Using an even length covering crossing circle to turn double crossings into quadruple crossings.}
\label{quadfolding}
\end{center}
\end{figure}

\begin{thm} $q(K) < c(K)$ for every nontrivial knot and link.
\end{thm}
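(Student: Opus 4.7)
The plan is to start with a minimal projection $P$ of $K$ having $c(K)$ double crossings, locate a crossing covering circle (CCC) of even length in $P$, and apply the quadruple folding move of Figure~\ref{quadfolding} to that CCC. Each remaining double crossing of $P$ is then converted individually into a quadruple crossing via the basic operation of Figure~\ref{quadruple}. Since folding a length-$n$ CCC replaces its $n$ double crossings by $n-1$ quadruple crossings and the basic operation preserves crossing count one-for-one, the final projection has $(n-1)+(c(K)-n)=c(K)-1$ quadruple crossings, so $q(K)\leq c(K)-1<c(K)$.

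The key task is to exhibit such a CCC. A standard Euler characteristic count on the 4-valent planar graph underlying $P$ gives $V=c(K)$, $E=2V$, $F=V+2$, and $\sum_F\deg F=4V$, so the average face degree is strictly less than $4$. Since $P$ is minimal it carries no monogons, whence the minimum face degree is $2$ or $3$. A bigon face immediately yields a length-$2$ CCC through its two vertex crossings: one perturbs the bigon boundary into a simple closed curve that passes through both crossings with the opposite-wedge condition satisfied and that crosses no other strand. Folding along this CCC collapses the two double crossings into a single quadruple crossing, and the desired bound follows.

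The main obstacle is the bigon-free case. If $P$ has no bigons, then every face has degree at least $3$, and the Euler inequality $\sum\deg F\geq 3F$ combined with $\sum\deg F=4V$ forces $V\geq 6$, with at least one face of degree $\geq 4$ whenever $V>6$. When such a face has even degree, its boundary can be perturbed into a longer even-length CCC and folding still saves one crossing. The residual cases are projections with $V=6$ and only triangular faces, and projections in which every non-triangular face happens to have odd degree. The first reduces to a finite check over the six-crossing prime knots and links, while the second is approached by applying Reidemeister III moves—which preserve the crossing count—to rearrange $P$ until a bigon or even-sided face arises. Verifying that such a rearrangement always succeeds, and that the perturbation of a face boundary into a genuine CCC always satisfies the opposite-wedge and no-extra-intersection conditions, is the hardest technical step.
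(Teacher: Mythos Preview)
Your overall plan---find an even-length crossing covering circle in a minimal diagram, fold it, and convert the leftover crossings one by one---is exactly the paper's strategy, and your bigon case is correct. The gap is in the bigon-free situation.

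When every face has odd degree you propose to apply Reidemeister~III moves until a bigon or an even-sided face appears. This cannot work in general: a Reidemeister~III move requires a triangular face in which one of the three strands lies entirely above (or entirely below) the other two at its two crossings, and in an \emph{alternating} diagram no triangle has this property. Hence for a minimal alternating projection with all odd faces---such as the standard octahedral projection of the Borromean rings, or larger alternating diagrams built from triangles and pentagons---no RIII move is available at all, and your rearrangement step never gets started. The $V=6$ finite check is also not carried out, so even that special case is left open.

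The paper closes this gap by a direct local construction that bypasses Reidemeister moves entirely. At any crossing $x$ the four incident faces $A,B,C,D$ are distinct (since the diagram is reduced); one then builds a simple closed curve that shadows the boundary of $R=A\cup B\cup C\cup D$, weaving alternately just inside and just outside as it traverses the boundary of each face. Because each of the four faces has odd degree, this alternation closes up consistently and the resulting curve meets the projection in an even number of crossings. A short case analysis handles the possibility that $P\setminus R$ is disconnected (yielding either a length-$2$ CCC through $x$ and a second shared crossing, or a variant of the shadowing curve). This is the idea your argument is missing.
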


\begin{proof} From what we have said, given a minimal crossing projection $P$ of $K$, it suffices to find a single crossing covering circle of even length, say $n$. Then by quadruple folding it, we obtain $n-1$ quadruple crossings, and the remainder of the double crossings can be converted directly to quadruple crossings as in Figure \ref{quadruple}, yielding fewer quadruple crossings than double crossings. 

If the projection contains a complementary region with an even number of edges, then we can take our crossing covering circle to surround it, passing through each of its crossings, and we are done. So we may assume that all of the complementary regions of $P$ have an odd number of edges.

Choose a crossing $x$. Replace the projection with the corresponding planar graph. The four complementary regions at $x$ must be all distinct, since if they were not, the original projection would not have been reduced. Call the regions $A$, $B$, $C$ and $D$. Let 
$R= A \cup B \cup C \cup D$. If $P - R$ is connected, then we can form a loop that shadows the boundary of  $R$, passing alternately inside and outside the outer boundary of each of the complementary regions, as in Figure \ref{regions}. That circle will have even length. 

\begin{figure}[h]
\begin{center}
\includegraphics[scale=0.5]{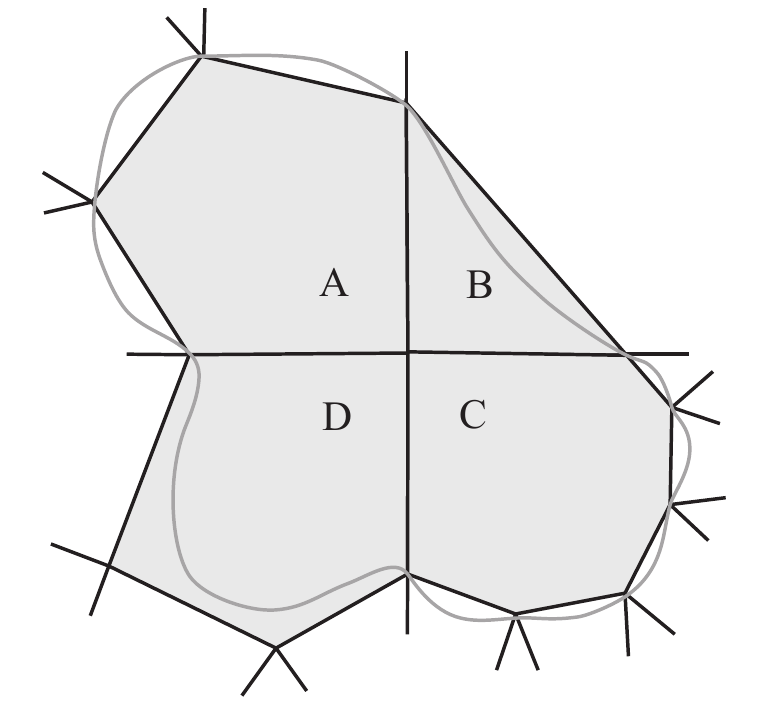}
\caption{Finding an even length crossing covering circle.}
\label{regions}
\end{center}
\end{figure}

If $P- R$ is not connected, then two of the regions must touch each other in more than one connected component. Suppose two opposite regions touch each other at a second crossing $y$. Then we can form a circle of length 2 that passes through these two regions and the crossings $x$ and $y$. If two opposite regions do not touch each other, say $A$ and $C$, we can take our covering crossing circle of even length as in Figure \ref{regions}, shadowing the boundary of $R$, to the outside on the boundaries of $A$ and $C$ and to the inside on the boundaries of $B$ and $D$.

\end{proof}

\section{Quadruple crossing number and the bracket polynomial}

In this section, we consider what the span of the bracket polynomial can tell us about quadruple crossing number. The bracket polynomial was introduced in \cite{Kau} and is defined by considering all the ways to split crossings in a fixed projection $P$ of $K$ and then taking $<P> = \Sigma \,A^{a(s)} A^{-b(s)} (-A^{2}- A^{-2})^{|s|-1}$, where the sum is over all states $s$ of $P$, $a(s)$ is the number of A-splits, $b(s)$ is the number of B-splits, and $|s|$ is the number of circles in the state $s$. Although the bracket polynomial depends on the particular projection, its span is an invariant for $K$.

Each quadruple crossing in a quadruple crossing projection will be one of the six possibilities mentioned above. When we resolve a single quadruple crossing into six double crossings and then split those crossings each in the two ways possible, we obtain 64 possible splittings. Many of these contain additional components, but once the components are removed, we are left with 14 possible splittings, four that consist of four parallel strands, called {\it parallel splits}, two that consist of four u-shaped components, called {\it U splits}, and eight that have two parallel components and two U components, called {\it mixed splits}. So the second line of the first skein relation below consists of one U split, two mixed splits and two parallel splits in that order.

\begin{thm} \label{skein}
\Large{

\begin{align}
 <\cb> =&  \quad A^{2}(<\statej> + <\staten >)  \\ &+ A^{0} (<\stateo> +<\statef> +<\statei> + \statea> + <\statep>) \notag \\&+ A^{-2} (<\statej> + <\statec> +<\stated> + <\statek> +<\statel>) \notag\\& + A^{-4} (<\stateo> + <\stateg> +<\stateh>) + A^{-6}(<\statem>) \notag
\end{align} 

\begin{align}
< \ca> =& \quad A^6<\statea>  + \quad A^4(< \stateb> +<\statec> +<\stated>)  \\ & + A^{2} (<\statee> + <\statef>+<\stateg> +<\stateh> + <\statei>) \notag \\ & + A^0(<\statej> +<\statek> + <\statel> +<\statem> +<\staten>)  \notag \\ & + A^{-2} (<\stateo> + <\statep>) \notag
\end{align}

\begin{align} <\cf> =&  \quad  A^{4}<\stateb >  \\ & + A^{2} (<\statef> +<\statei> + <\statep>+<\statea>)  \notag \\ & + A^{0} (<\stateb> + <\statec> +<\stated>  +<\statek> +<\statel> +<\staten>) \notag \\ & +A^{-2} (2<\statee> + < \stateg> +<\stateh>) + A^{-4} (<\statem>)  \notag
\end{align}

\begin{align} <\cg> =&  \quad A^{4}< \statea> +\quad  A^{2} (2<\stateb> +<\statec> + <\stated>)  \\ & + A^{0} (<\statee> + <\statef> +<\stateg> + <\stateh> +<\statei> +<\statep>) \notag \\ & +A^{-2} (<\statek> +<\statel> +<\statem> +<\staten> ) \notag \\ & + A^{-4} (<\statee>) \notag
\end{align}

\begin{align} <\cd> =&  \quad A^{4}<\statea >  \\ & + A^{2} (<\stateb> +<\statec> + <\stated> +<\staten>)\notag \\ & + A^{0} (2<\statee> + <\statef >+<\stateg >+<\stateh >+<\statei >) \notag \\ & +A^{-2} (<\stateb> + <\statek > + <\statel> +<\statem>) \notag \\ & + A^{-4} (<\statep> ) \notag
\end{align}

\begin{align} <\ce> =&  \quad A^{4}<\staten >  \\ & + A^{2} (<\statee> + <\statef> +<\statei> +<\statea>) \notag \\ & +
A^{0} (2<\stateb>  + <\statec> + <\stated> +<\statek> + <\statel> ) \notag \\ & +A^{-2} (<\statee> + < \stateg> +<\stateh> +<\statep>) \notag 
\\& + A^{-4} (<\statem> )  \notag
\end{align} 
 }

\end{thm}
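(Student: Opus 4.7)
The plan is to verify each of the six skein relations by direct expansion, applying the standard Kauffman bracket skein rule to each of the six ordinary (double) crossings into which a single quadruple crossing resolves. For a given quadruple crossing type $c_{ijkl}$, the heights $1,2,3,4$ together with the cyclic order of the strands completely determine the over/under pattern at each of the six double crossings. Applying the Kauffman skein rule (which replaces each double crossing by an A-split weighted by $A$, plus a B-split weighted by $A^{-1}$) independently at each of the six crossings produces $2^{6}=64$ states inside the disk containing the crossing. Each such state consists of a non-crossing matching of the $8$ boundary strand ends---one of the $C_{4}=14$ canonical tangles listed in the theorem---together with some number $k\ge 0$ of free closed loops, and contributes $A^{\,a-b}(-A^{2}-A^{-2})^{k}$ times the corresponding canonical tangle, where $a$ and $b$ are the numbers of A-splits and B-splits and $a+b=6$.

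A substantial reduction in work comes from the reflection symmetry recorded in Section~1: reflecting in a line perpendicular to strand~$1$ pairs $c_{1234}\leftrightarrow c_{1432}$, $c_{1243}\leftrightarrow c_{1342}$, and $c_{1324}\leftrightarrow c_{1423}$. Under this reflection every A-split at a double crossing is exchanged with the corresponding B-split, so the bracket of a reflected crossing is obtained from the original by the substitution $A\leftrightarrow A^{-1}$ together with the mirror image of each canonical state. Verifying one relation in each of the three mirror pairs therefore yields the other as an immediate consequence. Within each representative, additional cyclic and reflective symmetries of the quadruple crossing itself---most notably the $4$-fold rotational symmetry of $c_{1234}$---can be exploited to reduce the number of genuinely distinct resolution patterns that must be enumerated from scratch.

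I would carry out the enumeration by fixing a labelling of the six double crossings, say by the unordered pairs $\{i,j\}\subset\{1,2,3,4\}$ of strand heights, and then, for each of the $64$ sign patterns (A-split or B-split at each of the six crossings), recording both the non-crossing matching induced on the $8$ boundary points and the number $k$ of free closed loops. Summing $A^{\,a-b}(-A^{2}-A^{-2})^{k}$ over all sign patterns that yield a given canonical tangle, and expanding in powers of $A$, then produces the stated coefficient of that tangle on the right-hand side.

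The hard part will be the sheer bookkeeping: there are $64$ states per crossing type, and for each one must correctly identify the matching and the loop count, expand the powers of $-A^{2}-A^{-2}$, and gather like terms. The principal internal checks available are that (i) the highest and lowest powers of $A$ appearing in each relation must correspond, after expansion, to the all-A and all-B resolutions of the crossing together with any closed loops they create; and (ii) the three mirror-paired relations must match under the substitution $A\leftrightarrow A^{-1}$ and the mirror operation on states. Tracking these symmetries during the enumeration is the most practical way to keep the computation under control.
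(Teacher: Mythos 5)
Your proposal is correct and follows essentially the same route as the paper, which proves the theorem by directly resolving each quadruple crossing into its six double crossings, applying the Kauffman skein relation to all $2^6$ resolutions, removing closed loops with factors of $(-A^2-A^{-2})$, and collecting terms over the $14$ canonical tangles. Your additional use of the reflection pairing $c_{1234}\leftrightarrow c_{1432}$, etc., to halve the bookkeeping is a sensible refinement of, not a departure from, the paper's computation.
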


\begin{proof} This follows immediately by resolving each of the particular quadruple crossings into its six double crossings, and then applying the skein relation for the bracket polynomial  applied to double crossings (cf. \cite{Kau}) to split each of the crossings,  eliminate the closed components and clean up the result. These relations were first determined by Alex Tong Lin.
\end{proof}

Given a crossing $c$, define the first level splits to be those splits corresponding to the highest power of $A$ in the skein relation for that crossing in Theorem \ref{skein}. The second level splits are defined to be those that correspond to a power of $A$ two less than those of the first level splits, the third level splits are those that correspond to a power of $A$ that is four less than those of the first level splits, etc. 

Given a particular splitting of a crossing that occurs in a state $s$, we can perform a {\it split move} which replaces two adjacent strands in the splitting by the two other strands that connect their endpoints as in Figure \ref{splitmove}. The resulting state $s'$ will satisfy $|s'| = |s| \pm 1$.   

\begin{figure}[h]
\begin{center}
\includegraphics[scale=1.0]{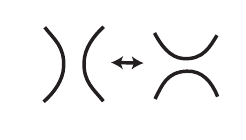}
\caption{Performing a split move on a splitting changes the resultant number of components by 1.}
\label{splitmove}
\end{center}
\end{figure}

Note that each split can be obtained from any other split by a sequence of such split moves. Each move changes the number of components in a state by $\pm 1$.

\begin{lemma} \label{splitmovelemma}Let $s$ be a particular state. 

\begin{enumerate}

\item Changing a split in $s$ of a given level to one of a lower level that is one split move different cannot increase the highest power of $A$ in the polynomial associated to $s$. 

\item Changing a split in $s$ of a given level to one of a higher level that is one split move different cannot decrease the lowest power of $A$ in the polynomial associated to $s$. 

\end{enumerate}
\end{lemma}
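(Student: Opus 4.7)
The plan is to reduce both parts to a short exponent bookkeeping argument. My first step is to write out the contribution of a state $s$ to the bracket polynomial as
\[ A^{p(s)}\bigl(-A^2 - A^{-2}\bigr)^{|s|-1}, \]
where $p(s) = \sum_c \ell_c(s)$ sums the level exponents that the skein relations of Theorem~\ref{skein} assign at each quadruple crossing $c$ under the splitting choice of $s$. Since every coefficient in the expansion of $(-A^2-A^{-2})^{|s|-1}$ is nonzero, the highest and lowest powers of $A$ in this contribution are $p(s)+2(|s|-1)$ and $p(s)-2(|s|-1)$ respectively.

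Next I would inspect the six skein relations in Theorem~\ref{skein} and confirm that for each of the six crossing types the level exponents appearing are consecutive even integers spaced by exactly $2$. Consequently, changing the split at a single crossing from its current level to any strictly lower level decreases $p(s)$ by some even $\Delta \geq 2$, while moving to a strictly higher level increases $p(s)$ by such a $\Delta$. The hypothesis that the two splits differ by exactly one split move, together with the observation before the lemma that each split move changes $|s|$ by $\pm 1$, then forces $|s'| - |s| = \pm 1$.

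Both parts now follow from a one-line calculation. For (1),
\[ \bigl(p(s') + 2(|s'|-1)\bigr) - \bigl(p(s) + 2(|s|-1)\bigr) = -\Delta + 2(|s'|-|s|) = -\Delta \pm 2, \]
which is $\leq 0$ since $\Delta \geq 2$. For (2), the analogous computation with $p(s') = p(s) + \Delta$ yields
\[ \bigl(p(s') - 2(|s'|-1)\bigr) - \bigl(p(s) - 2(|s|-1)\bigr) = \Delta - 2(|s'|-|s|) = \Delta \mp 2, \]
which is $\geq 0$ since $\Delta \geq 2$.

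I do not anticipate any real obstacle. The lemma essentially says that the uniform level spacing of $2$ in Theorem~\ref{skein} exactly matches the maximum change a single split move can produce in the $(-A^2-A^{-2})^{|s|-1}$ factor; the only care required is verifying this spacing, which is immediate from the stated expansions.
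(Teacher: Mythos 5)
Your proof is correct and is essentially the paper's argument: the paper likewise observes that dropping to a lower level decreases the skein-relation exponent by at least $2$ while a single split move changes $|s|$ by at most $1$, so the extra factor of $(-A^2-A^{-2})$ can at most offset that loss. Your version merely makes the bookkeeping explicit with $p(s)\pm 2(|s|-1)$, which is a fine (slightly cleaner) way to write the same computation.
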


\begin{proof} In the first case, by the skein relations, the power of $A$ from the relation will go down by at least 2. But the number of components in the state increases by at most 1, and therefore the additional $(-A^2 -A^{-2})$ that comes from the potential increase in the number of components can at most offset the loss of 2 in the power of A. Hence the new state has greatest power no larger than the original. A similar argument holds for the second statement.  \end{proof}

\begin{thm} \label{span} For any nontrivial knot or non-splittable link, span$(<K>) \leq 16 q(K)$.
\end{thm}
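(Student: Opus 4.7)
The plan is to adapt Kauffman's classical span bound ($\mathrm{span}\langle K\rangle \leq 4c(K)$ for double crossing projections) to the quadruple setting using the skein relations of Theorem \ref{skein}. Fix a quadruple crossing projection $P$ of $K$ with $q = q(K)$ crossings, and expand
$$\langle K \rangle = \sum_s A^{\alpha(s)}\,(-A^2 - A^{-2})^{|s|-1},$$
the sum running over all states $s$ obtained by choosing at each crossing one of the splittings appearing in Theorem \ref{skein}, with $\alpha(s)$ the sum of the per-crossing powers of $A$ and $|s|$ the number of circles in the resulting diagram. The highest power of $A$ appearing in $\langle K\rangle$ is at most $\max_s[\alpha(s) + 2(|s|-1)]$, and the lowest is at least $\min_s[\alpha(s) - 2(|s|-1)]$.

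Applying Lemma \ref{splitmovelemma} one split move at a time, the first maximum is attained by a state $s_+$ whose split at every crossing is a first-level split, and the second extremum by a state $s_-$ of only last-level splits; whenever several first- or last-level splits are available at a single crossing (as, for instance, for $c_{1234}$), we pick the ones that maximize $|s_\pm|$. A direct inspection of the six skein relations shows that at every crossing type the first-level power of $A$ exceeds the last-level power by exactly $8$; summing over the $q$ crossings yields $\alpha(s_+) - \alpha(s_-) = 8q$, so
$$\mathrm{span}(\langle K \rangle) \leq 8q + 2(|s_+| + |s_-|) - 4.$$

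Everything then reduces to the combinatorial inequality $|s_+| + |s_-| \leq 4q + 2$, which I expect to be the main obstacle. The classical proof for double crossings works because the A- and B-splits at each crossing are planar duals of one another: a checkerboard coloring of the projection's faces, together with Euler's formula applied to the projection graph, forces $|s_A| + |s_B| \leq c + 2$. For quadruple crossings the analogous approach is to verify case by case, across the six crossing types, that the specific first-level and last-level local arcs pictured in Theorem \ref{skein} pair up in a globally compatible way. The underlying projection is an $8$-valent planar graph with $q$ vertices, $4q$ edges, and hence $3q+2$ faces, and one hopes to show that the circles of $s_+$ and $s_-$ together partition these faces compatibly with a generalized two-coloring, after which Euler's formula delivers $|s_+| + |s_-| \leq 4q+2$. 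If no such clean duality emerges, a natural backup is induction on $q$: delete a single quadruple crossing, apply the hypothesis, and verify that reinserting the crossing increases $|s_+| + |s_-|$ by at most $4$ regardless of the crossing's type.
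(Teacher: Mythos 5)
Your skeleton matches the paper's at a high level --- expand the bracket over states, identify extremal states, extract the per-crossing spread of $8$ in the powers of $A$, and reduce everything to $|s_+|+|s_-|\leq 4q+2$ --- but two essential steps are missing or wrong. First, the claim that the top power of $A$ is attained by the all-first-level state does not follow from applying Lemma \ref{splitmovelemma} ``one split move at a time'': that lemma only controls a change by a \emph{single} split move, and for the crossing types $c_{1243}$ and $c_{1342}$ the \emph{parallel} second-level split is two split moves away from the first-level split. Switching to it drops the local power of $A$ by $2$ but can raise the number of circles in the state by $2$, for a net gain of $+2$ in the top power, so the all-first-level state need not be extremal. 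The paper has to redefine $s_{max}$ (and symmetrically $s_{min}$) at those two crossing types, sometimes using the parallel second-level (resp.\ fourth-level) split, and then argue separately that no further switch can improve the extreme power; without this, your inequality $\mathrm{span}(\langle K\rangle)\leq 8q+2(|s_+|+|s_-|)-4$ is not established for the states you actually analyze. (You do correctly handle the other multiplicity issue, choosing among the two first-level splits of $c_{1234}$ to maximize $|s_+|$.)

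Second, the combinatorial inequality $|s_+|+|s_-|\leq 4q+2$ is the heart of the theorem and you leave it unproved. The checkerboard/Euler duality you propose as the primary route does not carry over from the double-crossing case: the first- and fifth-level splits are not planar duals of one another (there are two distinct first-level splits for $c_{1234}$ and two fifth-level ones for $c_{1432}$, and the high/low pair at a single crossing can be a U-split against a parallel split), so there is no evident two-coloring of the $3q+2$ faces. Your one-sentence ``backup'' induction is essentially the paper's actual argument, but it is more delicate than ``reinserting a crossing adds at most $4$'': one performs the high split at a chosen crossing, obtaining a projection with up to four connected components, applies the inductive hypothesis to each component separately, and then runs a case analysis pairing the number of circles the high split creates against the number the low split at that same crossing can create (a high split yielding $4$ circles forces the low split to yield $2$; a high split yielding $1$ allows the low split at most $3$; and so on), together with a base-case verification that $|s_{max}|+|s_{min}|\leq 6$ for a single quadruple crossing. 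None of this is carried out, so the proposal as written does not constitute a proof.
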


\begin{proof}Let $P$ be a quadruple crossing projection of $K$ with $q$ triple crossings.  Each crossing is one of our six types. Define a state to be $s_{max}$ if it is obtained by splitting each crossing other than $c_{1342}$ or $c_{1243}$ as a first level split,  such that if there is more than one first level split (as occurs for $c_{1234}$), we choose a collection of first level splits that maximizes the number of component circles in the state. In the case of $c_{1342}$ or $c_{1243}$, we consider their parallel second level split. We use it rather than the first level split, if it intersects four components of the state or it intersects two components and the parallel fourth level split intersects four components. We call each of the splittings used to form $s_{max}$ a {\it high splitting}.

Similarly, define the state $s_{min}$ to be the state obtained by splitting each crossing other than $c_{1342}$ or $c_{1243}$ as a fifth level split, such that if there is a choice of fifth level splits (as occurs for $c_{1432}$), we choose a collection that maximizes the number of circles in $s_{min}$. For $c_{1342}$ and $c_{1243}$, we consider their parallel fourth level split. We use it rather than the fifth level split if we used the second level split for $s_{max}$. We call each of the splittings used to form $s_{min}$ a {\it low splitting}.

We first prove that there is no state with a higher power of $A$ than $s_{max}$. Any other state is obtained from  $s_{max}$ by changing some number of the first level splits on the crossings other than $c_{1342}$ or $c_{1243}$ . By checking the four skein relations for the four other crossings  in Theorem \ref{skein}, we see that each of the second level splits is obtained from one of the first level splits by a single split move. Each third level split is obtained from a second level split by a split move. And so on. Similarly, each fourth level split is obtained from a fifth level split by a single split move, etc.  

Thus, in the cases of $c_{1432}, c_{1324}$ and $c_{1423}$, where there is only one first level splitting, Lemma \ref{splitmovelemma} implies that changing the first level split  to a second level split results in a  new state that has greatest power no larger than the original. 

In the case of $c_{1234}$, where there are two first level splittings, each second level splitting is one split move away from one of the first level splittings. If we change the first level splitting in $s_{max}$ to a second level splitting and that splitting differs from the first one by one split move, then the number of components can go up by at most one, and the greatest power of $A$ is unchanged. If we change the first level splitting to a second level splitting and the number of components goes up by at least 2, then the other first level splitting, which differs from the second level splitting by only one split move, would have generated more components for $s_{max}$ than did the one we had, a contradiction to our procedure for choosing the splittings that form $s_{max}$. 

    Finally, we consider the pair of crossings $c_{1243}$ and $c_{1342}$.  Except for the parallel second level split, all other second level splits differ from the first level split by one split move, and hence Lemma \ref{splitmovelemma} shows that we cannot increase the highest power of $A$ by switching to one of those. The same holds for the third, fourth and fifth level splits. Suppose now that the parallel second level split is used in $s_{max}$. Then there are two cases.  It could be that the  parallel second level split generates four components, which implies that the the first level split and the fifth level split each generate one component and the parallel fourth level split generates two.  Or it could be that the parallel fourth level split generates four components, while the parallel second level split generates two components and the first level and fifth level splits each generate one component.  In either case, changing this split cannot yield a higher power of $A$, since in the first case, the highest power of $A$ in the polynomial term of the parallel second level split beats that of the first level split by $A^4$ and in the second case, the highest power of $A$ in the polynomial term of the parallel second level split ties that of the first level split,  and no other split has a  higher power of $A$ in its polynomial term than the first level split.
    
 The same type of arguments apply to show that the polynomial term of $s_{min}$ possesses the lowest power of $A$.

Given a projection $P$, define $M_P$ to be the highest exponent of $A$ in the polynomial term associated to $s_{max}$ and  $m_P$ to be the lowest exponent of $A$ in the polynomial term associated to $s_{min}$.

Let $|c_{ijk}|$ be the number of crossings of type $c_{ijk}$ in $P$. So  $\Sigma |c_{ijk}| = q$. 
According to the skein relation,  

$$M_P \leq 6|c_{1432}| + 4(|c_{1342}|+|c_{1243}|+|c_{1324}|+|c_{1423}|)+2|c_{1234}| +2|s_{max}|-2$$

$$m_P \geq  -2|c_{1432}| -4(|c_{1342}|+|c_{1243}|+|c_{1324}|+|c_{1423}|)-6|c_{1234}| -(2|s_{min}| -2)$$

Hence, \\

\begin{align}span(<K>) \leq M_P- m_P & \leq 8 \Sigma |c_{ijk}| + 2(|s_{max}| + |s_{min}|) - 4 \notag \\
 &= 8q+ 2(|s_{max}| + |s_{min}|) - 4 \notag
 \end{align}
 
 Thus, it suffices to prove that $|s_{max}| + |s_{min}| \leq 4q + 2$ for any connected quadruple crossing projection with $q$ quadruple crossings.

We apply induction on $q$. If there is a single quadruple crossing, the single splitting for $s_{max}$ is determined by the type of crossing, but in all cases the high splitting will either be  a U-splitting as in Figure \ref{fig:splitting1} or a parallel splitting as in Figure \ref{fig:splitting2} and \ref{fig:splitting3}, allowing for rotation. If the high splitting is one or two of these, the low splitting or splittings come from those that remain. Note that the relative orientations of these three splittings must appear as in Figure \ref{fig:highlowsplittings} but all three can be rotated together. 

\begin{figure}[h]
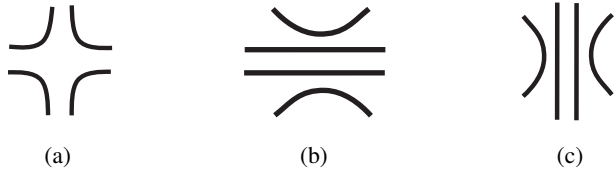

	\begin{subfigure}[b]{.2\textwidth}
		\centering
		{\includegraphics[height=18mm]{State1432-2.pdf}}
		\caption{}
		\label{fig:splitting1}		
	\end{subfigure} 
	\begin{subfigure}[b]{.2\textwidth}
		\centering
		{\includegraphics[height=18mm]{State1432-13.pdf}}
		\caption{}
		\label{fig:splitting2}
	\end{subfigure}
          \begin{subfigure}[b]{.2\textwidth}
		\centering
		{\includegraphics[height=18mm]{State1432-14.pdf}}
		\caption{}
		\label{fig:splitting3}		
	\end{subfigure}
	\\ \vspace{.2cm}
		\caption{Splittings that can correspond to high and low states, allowing rotation of all three together.}
		\label{fig:highlowsplittings}
\end{figure}
\vspace{.5cm}

It is then straightforward to check that if the high splitting yields 4 components, the low splitting must yield 2. If the high splitting yields 3 components, the low splitting must yield 1 or 3 components. If the high splitting yields 2 components, the low splitting must yield 2 or 4. And if the high splitting yields 1 component, the low splitting must yield 1 or 3. Thus, $|s_{max}| + |s_{min}| \leq 6$. 
 
Suppose now that we have proved that for a connected projection with $q$ quadruple crossings, $|s_{max}| + |s_{min}| \leq 4q + 2$. Given a connected quadruple crossing projection $P$ with $q+1$ quadruple crossings, choose one and split it as a high split. In the one case of a crossing of type $c_{1234}$ where there are two possible high splits, choose the one that together with all the other high splits would yield the greatest number of components. After splitting, we have a projection $P'$ of a link with $q$ quadruple crossings. This projection has 1,2,3 or 4 connected components. Suppose first that it has four connected components, labelled $L_1, L_2, L_3$ and $L_4$. Each of the  components contains some number of the quadruple crossings, denoted $q_1, q_2, q_3$ and $q_4$ respectively. We label their max and min states as $s'_{max,i}$ and $s'_{min,i}$.

Then by our induction hypothesis, $|s'_{max, i}| + |s'_{min, i}| \leq 4 q_i +2$ for each $i$. But $|s_{max}| = \sum |s'_{max, i}|$ since we split the first crossing as a high split. To obtain $|s_{min}|$, we take $|s'_{min,i}|$ for each component, but then we must change the first crossing we set from the high splitting to the low splitting. So the total number of components will go down by 2. Hence, $|s_{min}| = \sum _{i=1}^{4} |s'_{min, i}| -2$.Thus, $|s_{max}| + |s_{min}| \leq \sum _{i=1}^{4} (4 q_i +2) - 2 = 4q + 6 = 4(q+1) + 2$ as we wanted to show.

In the case that the high split at the first crossing yields three components, the low split at the same crossing yields at most three components. Hence,  $|s_{max}| + |s_{min}| \leq \sum _{i=1}^{3}(4 q_i +2)  = 4q + 6= 4(q+1) + 2$ as we wanted to show.

If the high split at the first crossing yields 2 components, the fact the low split can yield at most 4 components yields $|s_{max}| + |s_{min}| \leq \sum _{i+1}^{2}(4 q_i +2) + 2  = 4q + 6= 4(q+1) + 2$. Finally, if the high split yields only one component,  the low split can yield at most three  components so $|s_{max}| = |s'_{max}|$ and $|s_{min}| \leq |s'_{min}| +2$.  
Thus, we have  $|s_{max}| + |s_{min}| \leq  (4q +2) + 2 \leq 4(q+1) + 2$. Hence, in all cases, $|s_{max}| + |s_{min}| \leq  4(q+1) + 2$.
\end{proof}

\begin{corollary} \label{crossing} For any nontrivial alternating knot or link $K$, $q(K) \geq \frac{c(K)}{4}$.
\end{corollary}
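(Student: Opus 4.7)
The plan is to combine Theorem \ref{span}, which supplies the upper bound $\mathrm{span}(\langle K\rangle)\le 16\,q(K)$, with the classical theorem of Kauffman (independently due to Murasugi and Thistlethwaite) that for any reduced alternating projection of a nontrivial knot or non-splittable link, the span of the bracket polynomial is exactly $4\,c(K)$. Since $K$ is alternating and nontrivial, this latter equality applies, so I would simply chain the two estimates:
\[
4\,c(K) \;=\; \mathrm{span}(\langle K\rangle) \;\le\; 16\,q(K),
\]
and divide by $4$ to obtain $q(K)\ge c(K)/4$.

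The only preliminary point to check is that the hypotheses of Theorem \ref{span} are met, namely that $K$ is a nontrivial knot or non-splittable link; this is guaranteed by the hypothesis of the corollary (and one may silently discard the splittable case since splittable alternating links reduce to their alternating components). No further work is needed beyond quoting the Kauffman span formula from \cite{Kau}.

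There is no real obstacle here: the corollary is essentially a one-line consequence of Theorem \ref{span} together with a well-known fact about alternating links. The only nuance worth flagging in the write-up is that the $\mathrm{span}(\langle K\rangle)=4\,c(K)$ identity requires a reduced alternating diagram, but this is automatic when $c(K)$ denotes the minimal crossing number of a nontrivial alternating link.
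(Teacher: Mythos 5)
Your proposal is correct and is essentially identical to the paper's proof: both combine the Kauffman--Murasugi--Thistlethwaite identity $\mathrm{span}(\langle K\rangle)=4c(K)$ for reduced alternating diagrams with the bound $\mathrm{span}(\langle K\rangle)\le 16\,q(K)$ from Theorem \ref{span} and divide by $4$. Nothing further is needed.
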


\begin{proof} Results of \cite{Kau}, \cite{Muras} and \cite{Th} yield the fact that for a reduced alternating knot, \\ span$(<K>) = 4c[K]$. Hence, by Theorem \ref{span}, $4c(K) \leq 16 q(K)$, yielding the result.
\end{proof}

\begin{corollary} Let $T_{r,s}$ denote an $(r,s)$-torus knot. Then $q(T_{r,s}) \geq \frac{r+s-2}{4}$.
\end{corollary}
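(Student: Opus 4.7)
The plan is to combine Theorem \ref{span} with a standard computation of the $A$-span of the bracket polynomial of the torus knot. Theorem \ref{span} gives $q(K)\geq \text{span}(\langle K\rangle)/16$, so it suffices to establish that $\text{span}(\langle T_{r,s}\rangle)\geq 4(r+s-2)$. Since the corollary is vacuous when $T_{r,s}$ is trivial, I would assume $\gcd(r,s)=1$ with $r,s\geq 2$.

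For the span estimate, I would invoke Jones' classical formula for the Jones polynomial of an $(r,s)$-torus knot:
$$V(T_{r,s})(t) \;=\; \frac{t^{(r-1)(s-1)/2}}{1-t^2}\bigl(1 - t^{r+1} - t^{s+1} + t^{r+s}\bigr).$$
A direct long division by $1-t^2$ shows that the bracketed quotient is a genuine polynomial whose constant term is $1$ and whose leading term is $-t^{r+s-2}$. Multiplying by the prefactor $t^{(r-1)(s-1)/2}$, the lowest power of $t$ appearing in $V(T_{r,s})$ is $(r-1)(s-1)/2$ and the highest is $(r-1)(s-1)/2+(r+s-2)$, so the $t$-breadth of $V(T_{r,s})$ is exactly $r+s-2$. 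Since the bracket polynomial and the Jones polynomial are related by $t = A^{-4}$ together with a writhe normalization that only shifts powers, this translates to $\text{span}(\langle T_{r,s}\rangle) = 4(r+s-2)$.

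Substituting into Theorem \ref{span} then yields $16\,q(T_{r,s})\geq 4(r+s-2)$, which rearranges to the claimed inequality. The only point requiring care is the $t$-breadth claim, but this is routine: the four exponents $0,\,r+1,\,s+1,\,r+s$ in the numerator of Jones' formula are all distinct (using $\gcd(r,s)=1$ and $r,s\geq 2$, which forces $r\neq s$), so the extremal contributions survive and no cancellation collapses the span.
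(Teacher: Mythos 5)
Your proof is correct and follows the same route as the paper: both apply Theorem \ref{span} together with the fact, due to Jones, that $\mathrm{span}(\langle T_{r,s}\rangle)=4(r+s-2)$. The paper simply cites this span as a known consequence of \cite{Jones}, whereas you supply the short verification from the explicit Jones polynomial formula; the substance is identical.
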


\begin{proof} This follows immediately from the fact that follows from results in \cite{Jones} that the span of the bracket polynomial for such a knot is $4(r+s-2)$.
\end{proof}

As we will see in the next section, this does yield the exact quadruple crossing number for the 2-braid knots, although we will not use this to prove it.

\section{Moves to obtain quadruple crossing number}

In this section, we exhibit in Figure \ref{fig:quadcrossings} a collection of moves that allows one to convert certain standard double crossing projections of a knot into quadruple crossing projections. With these moves and Theorem \ref{span} we can determine the quadruple crossing number of a variety of knots.
\begin{figure}[h]
	\begin{subfigure}[b]{.2\textwidth}
		\centering
		{\includegraphics[height=18mm]{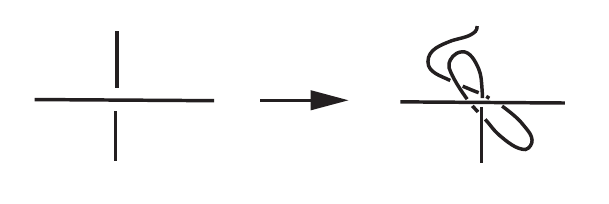}}
		\label{fig:quadmoveI}
		\caption{Move I (holds for any choice at crossing)}
	\end{subfigure} \\ \vspace{.5cm}
	\begin{subfigure}[b]{.2\textwidth}
		\centering
		{\includegraphics[height=20mm]{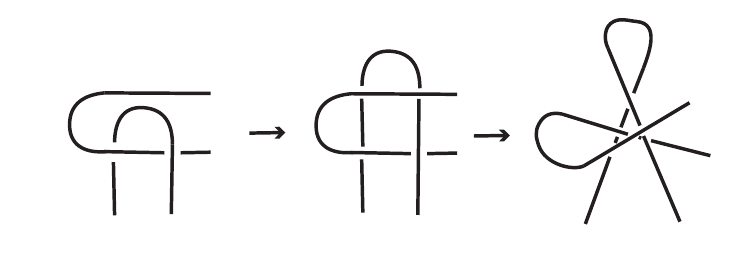}}
		\caption{Move II (holds for any choice at crossings)}
		\label{fig:1quadmoveII}
	\end{subfigure} \\ \vspace{.5cm}
          \begin{subfigure}[b]{.2\textwidth}
		\centering
		{\includegraphics[height=18mm]{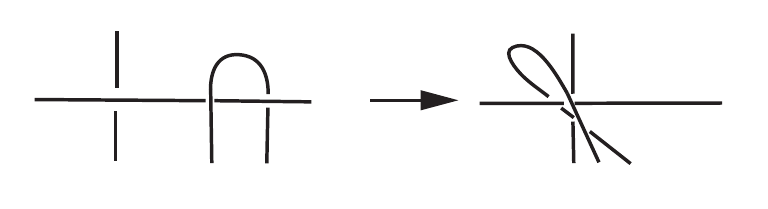}}
		\label{fig:quadmoveIII}
		\caption{Move III (holds for any choice at crossing)}
	\end{subfigure} \\ \vspace{.5cm}
	\begin{subfigure}[b]{.2\textwidth}
		\centering
		{\includegraphics[height=20mm]{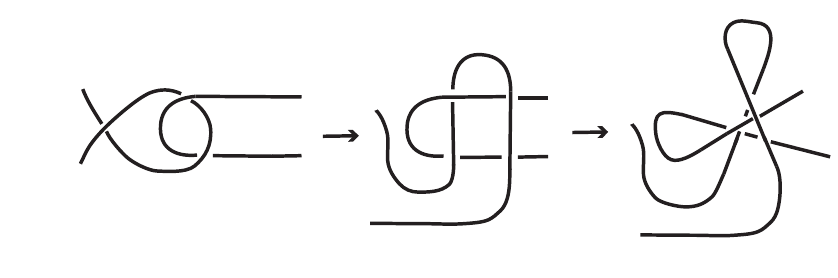}}
		\label{fig:quadmoveIV}
		\caption{Move IV}
	\end{subfigure}	 \\ \vspace{.5cm}
		\begin{subfigure}[b]{.2\textwidth}
		\centering
		{\includegraphics[height=20mm]{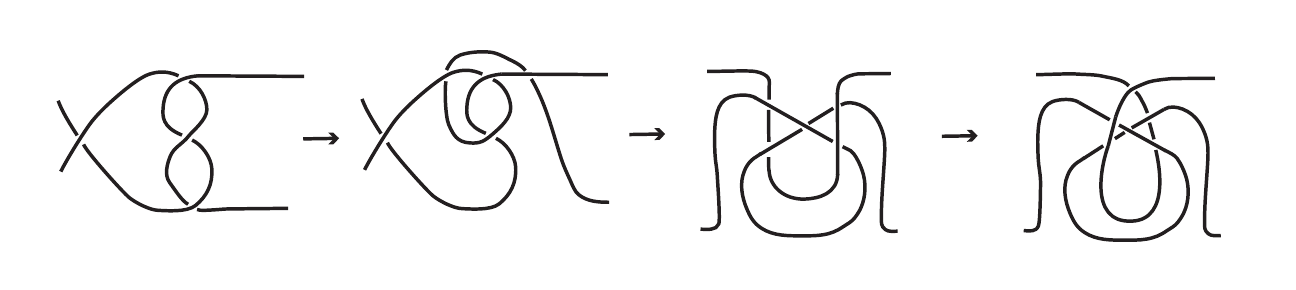}}
		\caption{Move V}
		\label{fig:quadmoveV}
	\end{subfigure} \\ \vspace{.5cm}
	\begin{subfigure}[b]{.2\textwidth}
		\centering
		{\includegraphics[height=20mm]{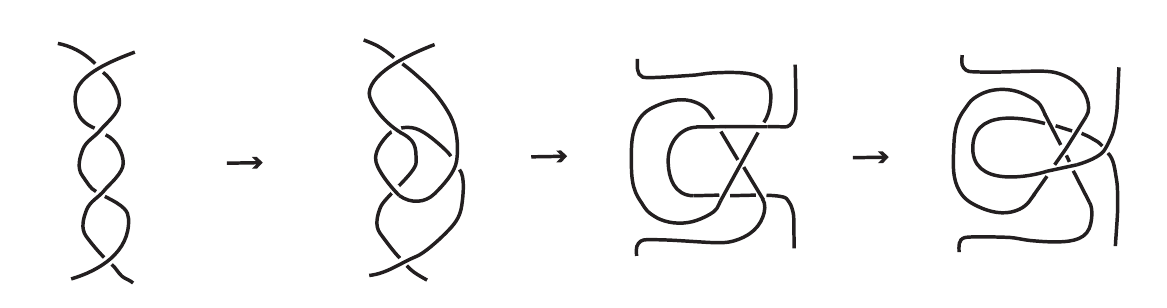}}
		\caption{Move VI}
		\label{fig:quadmoveVI}
	\end{subfigure} \\ \vspace{.5cm}
	\begin{subfigure}[b]{.2\textwidth}
		\centering
		{\includegraphics[height=20mm]{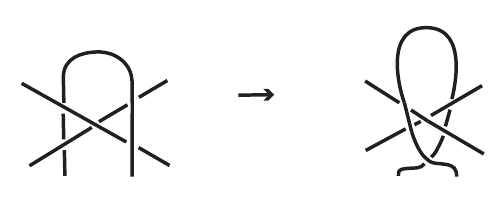}}
		\caption{Move VII}
		\label{fig:quadmoveVII}
	\end{subfigure}
	\vspace{.5cm}
	\caption{Moves to obtain quadruple crossings.}

\label{fig:quadcrossings}
\end{figure}
\vspace{.5cm}

Moves III and IV apply to regions of a projection containing 3 crossings. We call a region as appears on the left in those diagrams a 3-set. Note that Move IV is in fact a special case of Move III. Similarly Moves V and VI apply to regions of a projection containing 4 crossings. We call such a region a 4-set. A single crossing is a 1-set and an alternating bigon is a 2-set. In order to turn a standard projection into a quadruple crossing projection, we will be looking to subdivide it into 4-sets and 3-sets, with the possibility of a 1-set or 2-set as well.

\begin{corollary} \label{sets} Let $K$ be an alternating knot or link in a reduced alternating projection $P$ with $c$ crossings. 

 \begin{enumerate}
\item If $c = 4k$ for some $k \geq 0$, and a reduced alternating projection can be subdivided into 4-sets, then $q(K) = \frac{c}{4}$.
\item If $c = 4k+1$, and a reduced alternating projection can be subdivided into a disjoint collection of all 4-sets, and a 3-set and a 2-set, or  all 4-sets and a 1-set,  or all 4-sets and three 3-sets, then $q(K)= \frac{c+3}{4}$.
\item If $c= 4k+2$, and a reduced alternating projection can be subdivided into a disjoint collection of all 4-sets and a 2-set, or all 4-sets, and two 3-sets, then  $q(K)= \frac{c+2}{4}$.
\item If $c= 4k+3$, and  a reduced alternating projection can be subdivided into a disjoint collection of all 4-sets and one 3-set, then $q(K)=  \frac{c+1}{4}$.
\end{enumerate}
\end{corollary}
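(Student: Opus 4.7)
The plan is a matching pair of bounds for each case. For the lower bound, Corollary \ref{crossing} gives $q(K) \geq c(K)/4$, and since a reduced alternating projection realizes the crossing number (Tait's first conjecture, as invoked in Corollary \ref{crossing}), $c(K) = c$. Because $q(K)$ is a nonnegative integer, $q(K) \geq \lceil c/4 \rceil$, which is exactly $c/4$, $(c+3)/4$, $(c+2)/4$, $(c+1)/4$ in cases (1)--(4) respectively.

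For the upper bound, the key observation is that each of Moves I--VII in Figure \ref{fig:quadcrossings} converts a single $n$-set into exactly one quadruple crossing, while leaving the rest of the projection untouched: Move I handles a 1-set, Move II handles a 2-set (alternating bigon), Moves III and IV handle a 3-set, and Moves V, VI, VII handle a 4-set. Given a decomposition of $P$ into pairwise disjoint sets of the specified types, applying the appropriate move to each set simultaneously yields a quadruple crossing projection whose quadruple crossing count equals the number of sets in the decomposition.

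It then remains to count sets in each case. In (1), $k$ 4-sets give $q(K) \leq k = c/4$. In (2), each listed decomposition contains $k+1 = (c+3)/4$ sets: $(k-1) + 1 + 1$ for the decomposition into 4-sets, a 3-set and a 2-set; $k + 1$ for 4-sets and a 1-set; and $(k-2) + 3$ for 4-sets and three 3-sets. Cases (3) and (4) are analogous, yielding $k+1 = (c+2)/4$ and $k+1 = (c+1)/4$ quadruple crossings respectively. These upper bounds match the lower bounds established above, finishing the proof.

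The main obstacle is the geometric verification that each move in Figure \ref{fig:quadcrossings} acts entirely within the bounding disk of its $n$-set input, so that disjoint sets in $P$ can be processed independently without interference. Once this locality is checked directly from the pictures, the rest of the argument is just the counting above together with the integrality step.
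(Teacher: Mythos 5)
Your proposal is correct and is essentially the paper's own argument: the lower bound comes from Corollary \ref{crossing} (i.e.\ Theorem \ref{span} plus minimality of reduced alternating diagrams) together with integrality, and the upper bound comes from applying the moves to convert each set in the decomposition into a single quadruple crossing, with the set counts matching in each of the four cases. The paper states this in one sentence; your version merely fills in the arithmetic and flags the (correct) locality check for disjoint sets, with only the immaterial slip of listing Move VII among the 4-set moves.
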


\begin{proof} By Theorem \ref{span}, we obtain a lower bound on quadruple crossing number which exactly matches the upper bound obtained by the number of sets in our subdivision of the projection, each set of which becomes a single quadruple crossing. 
\end{proof}

This corollary is relatively effective at determining the quadruple crossing number of small crossing alternating knots. (See Table \ref{quadtable}.) For instance, for the 5,6,7 and 9-crossing prime alternating knots, it determines exactly the quadruple crossing number for all except $7_6, 7_7, 9_{26},9_{31},9_{32}, 9_{33}, 9_{34}$, and $9_{40}$. It is less effective at determining the quadruple crossing number of the 8-crossing knots, only determining $q = 2$ for  $8_3$, $8_4$ and $8_9$.  However, by determining all knots generated by projections with $q=2$, and using Move VII on $8_{19}$, $8_{20}$ ad $8_{21}$, the quadruple crossing number of $7_6$ and $7_7$ and all 8-crossing knots except four is determined. Corollary \ref{sets} also allows us to determine that the quadruple crossing number  is 3 for many of the prime 10-crossing knots. Note that for $10_{33}$ and $10_{34}$, one must go to  non-alternating 11-crossing diagrams to obtain the necessary decomposition into sets. Any prime 10-crossing knot that does not appear in the table has unknown quadruple crossing number.

Corollary \ref{sets} applies to various infinite families of knots and links, including a variety of pretzel knots and rational knots. We mention two particularly simple families here.

\begin{corollary}
\begin{enumerate}
  \item Let $K_n$ be a 2-braid knot or link of $n$ crossings. Then $q(K_n) = \lceil \frac{n}{4} \rceil$.
\item Let $J_n$ be a rational knot with alternating Conway notation 3 a. Then $q(J_n) = \lceil \frac{3+a}{4} \rceil$.
\end{enumerate}
\end{corollary}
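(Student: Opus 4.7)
The strategy has two halves: a matching lower bound from the span of the bracket polynomial and an upper bound obtained by decomposing the standard projections into sets as in Corollary \ref{sets}.

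For the lower bound in both parts, note that the standard 2-braid projection of $K_n$ and the standard rational projection of $J_n$ are reduced alternating diagrams, so Corollary \ref{crossing} gives
\[
q(K_n) \geq \frac{n}{4}, \qquad q(J_n) \geq \frac{3+a}{4}.
\]
Since $q$ is an integer, these round up to $\lceil n/4 \rceil$ and $\lceil (3+a)/4 \rceil$ respectively.

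For the upper bound in part (1), I would read the $n$ crossings of the standard 2-braid linearly and partition them into consecutive blocks: $\lfloor n/4 \rfloor$ blocks of four (each the 2-braid 4-set handled by Move V or VI) and one tail block of size $r = n \bmod 4 \in \{0,1,2,3\}$, which is respectively empty, a 1-set, an alternating 2-set, or a 3-set. Each residue class matches exactly one of the permitted decompositions listed in the corresponding case of Corollary \ref{sets}, so $q(K_n) \leq \lceil n/4 \rceil$, meeting the lower bound.

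For the upper bound in part (2), I would draw $J_n$ in its L-shaped rational projection with a 3-leg and an $a$-leg meeting at a corner, then case split on $a \bmod 4$. When $a = 4m$, take the 3-leg as a 3-set and partition the $a$-leg into $m$ 4-sets; when $a = 4m+2$, add a 2-set cut from two corner-adjacent crossings of the $a$-leg; when $a = 4m+3$, add a second 3-set cut from three corner-adjacent crossings of the $a$-leg. In each of these residues the count of sets equals $\lceil (3+a)/4 \rceil$ and fits one of parts (2)--(4) of Corollary \ref{sets}.

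The delicate case is $a = 4m+1$, where the naive decomposition (3-set from the 3-leg, $m$ 4-sets plus a leftover 1-set from the $a$-leg) yields $m+2$ sets instead of the needed $m+1$. The plan is to absorb the three crossings of the 3-leg together with the single $a$-leg crossing nearest the corner into one 4-set, and then cover the remaining $4m$ crossings of the $a$-leg with $m$ further 4-sets. The main obstacle is to verify that this corner block actually is a 4-set in the sense of Section 4: I would match it against Moves V, VI, and VII in Figure \ref{fig:quadcrossings}, or, if it is not literally shown there, exhibit a planar isotopy of the L-corner that brings it onto one of those templates. Once this is confirmed, the total becomes $m+1 = \lceil (3+a)/4 \rceil$ and Corollary \ref{sets}(1) completes the argument.
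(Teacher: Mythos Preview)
Your proposal is correct, and part (1) matches the paper's treatment exactly. For part (2), however, you take an unnecessarily case-heavy route compared to the paper. The paper observes once and for all that the three crossings of the $3$-leg together with the first crossing of the $a$-leg form a $4$-set, and it uses this corner $4$-set in \emph{every} residue class. What remains is then a single twist region of $a-1$ crossings, which is handled uniformly: all $4$-sets if $3+a=4k$, or all $4$-sets plus one leftover $1$-, $2$-, or $3$-set according to the residue of $3+a$ modulo $4$. This maps directly onto the four cases of Corollary~\ref{sets} without further thought.

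Your version instead treats the $3$-leg as a $3$-set in three of the four residue classes and only invokes the corner $4$-set in the case $a\equiv 1\pmod 4$ where you are forced to. This works --- your set counts and the matching against parts (1)--(4) of Corollary~\ref{sets} are all correct --- but it means you carry out four separate decompositions rather than one. It also means that the step you flag as ``the main obstacle'' (verifying that the corner block is a $4$-set) is in fact the central idea of the paper's proof, not a peripheral technicality confined to one case. The paper simply asserts this without displaying the isotopy, so your caution there is not misplaced; but once you accept it, you may as well use it throughout and avoid the residue split entirely.
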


\begin{proof} In the case of $K_n$, the four cases of Corollary \ref{sets} yield the result. In the case of $J_n$, there is a 4-set coming from the 3 and the first crossing in the sequence of $a$ crossings. The remaining $a-1$ crossings can be either subdivided into all 4-sets, if $3+a = 4k$, or one 1-set and the rest 4-sets if $3+a=4k+1$, or one 2-set and the rest 4-sets if $3+a = 4k+2$ or one 3-set and the rest 4-sets if $3+a = 4k+3$.
\end{proof}

	\begin{table}[htbp!]
	\begin{center}
		
		\begin{tabular}{| c | c || c | c || c | c || c | c |}
			\hline
			Knot & $q(K)$ & Knot & $q(K)$ & Knot & $q(K)$& Knot & $q(K)$\\\hline
			$3_1$ & 1 & $9_8$ & 3 & $10_{1}$ & 3 & $10_{43}$ & ? \\\hline
			$4_1$ & 2 & $9_9$ & 3 & $10_{2}$ & 3  & $10_{44}$ & ? \\\hline
			$5_1$ & 2  &$9_10$ & 3 & $10_{3}$ & 3  & $10_{45}$ & ? \\\hline
			$5_{2}$ & 2 & $9_{11}$ & 3 & $10_{4}$ & 3  & $10_{46}$ & 3 \\\hline
			$6_1$ & 2  & $9_{12}$ & 3 & $10_{5}$ & 3  & $10_{47}$ & 3 \\\hline
			$6_{2}$ & 2  & $9_{13}$ & 3& $10_{6}$ & 3  & $10_{48}$ & 3 \\\hline
			$6_{3}$ & 2 & $9_{14}$ & 3 & $10_{7}$ & 3 & $10_{49}$ & 3 \\\hline
			$7_1$ & 2 & $9_{15}$ & 3 & $10_{8}$ & 3 & $10_{50}$ & 3 \\\hline
			$7_{2}$ & 2  & $9_{16}$ & 3 & $10_{9}$ & 3 & $10_{51}$ & 3   \\\hline
			$7_{3}$ & 2  & $9_{17}$ & 3 & $10_{10}$ & ? & $10_{52}$ & 3  \\\hline
			$7_{4}$ & 2 & $9_{18}$ & 3 & $10_{11}$ & 3 & $10_{53}$ & 3 \\\hline
			$7_{5}$ & 2  & $9_{19}$ & 3& $10_{12}$ & 3 & $10_{54}$ & ? \\\hline
			$7_{6}$ & 3  & $9_{20}$ & 3 & $10_{13}$ & 3 & $10_{55}$ & ? \\\hline
			$7_7$ &  3 & $9_{21}$ & 3 & $10_{14}$ & 3 & $10_{56}$ & ? \\\hline
			$8_1$ & 3 & $9_{22}$ & 3  & $10_{15}$ & 3 & $10_{57}$ & ? \\\hline
			$8_{2}$ & 3  & $9_{23}$ & 3 & $10_{16}$ & 3 & $10_{58}$ & ? \\\hline
			$8_{3}$ & 2  & $9_{24}$ & 3 & $10_{17}$ & ? & $10_{59}$ & ? \\\hline
			$8_{4}$ & 2  & $9_{25}$ & 3 & $10_{18}$ & 3 & $10_{60}$ & ? \\\hline
			$8_{5}$ & 3  & $9_{26}$ & ? & $10_{19}$ & 3 & $10_{61}$ & 3 \\\hline
			$8_{6}$ & 3  & $9_{27}$ & 3 & $10_{20}$ & 3 & $10_{62}$ & 3 \\\hline
			$8_7$ & 3 & $9_{28}$ & 3  & $10_{21}$ & 3 & $10_{63}$ & 3 \\\hline
			$8_8$ & 3  & $9_{29}$ & 3 & $10_{22}$ & 3 & $10_{64}$ & 3 \\\hline
			$8_9$ & 2  & $9_{30}$ & 3 & $10_{23}$ & 3 & $10_{65}$ & 3 \\\hline
			$8_{10}$& 3  & $9_{31}$ & ? & $10_{24}$ & 3  & $10_{66}$ & 3 \\\hline
			$8_{11}$ &3 & $9_{32}$ & ?& $10_{25}$ & 3 & $10_{74}$ & 3 \\\hline
			$8_{12}$& 3 & $9_{33}$ & ?& $10_{26}$ & 3 & $10_{76}$ & 3 \\\hline
			$8_{13}$ & ? & $9_{34}$ & ? & $10_{27}$ & ? & $10_{77}$ & 3 \\\hline
			$8_{14}$ &3  & $9_{35}$ & 3 & $10_{28}$ & 3 & $10_{124}$ & 3 \\\hline
			$8_{15}$ & 3 & $9_{36}$ & 3 & $10_{29}$ & 3 & $10_{125}$ & 3 \\\hline
			$8_{16}$ & ? & $9_{37}$ & 3 & $10_{30}$ & 3 & $10_{126}$ & 3 \\\hline
			$8_{17}$ & ? & $9_{38}$ & 3 & $10_{31}$ & 3  & $10_{127}$ & 3 \\\hline
			$8_{18}$ & ? & $9_{39}$ & 3 & $10_{32}$ & 3  & $10_{130}$ & 3 \\\hline
			$8_{19}$ & 2 & $9_{40}$ & ? & $10_{33}$ & 3  & $10_{131}$ & 3 \\\hline
			$8_{20}$ & 2 & $9_{41}$ & 3 & $10_{34}$ & 3  & $10_{134}$ & 3 \\\hline
			$8_{21}$ & 2 & $9_{42}$ & 3 & $10_{35}$ & 3  & $10_{135}$ & 3 \\\hline
			$9_1$ & 3 & $9_{43}$ & 3 & $10_{36}$ & 3  & $10_{139}$ & 3  \\\hline
			$9_{2}$ & 3 & $9_{44}$ & 3  & $10_{37}$ & ?   & $10_{140}$ & 3 \\\hline
			$9_{3}$ & 3  & $9_{45}$ & 3  & $10_{38}$ & ?   & $10_{142}$ & 3 \\\hline
			$9_{4}$ & 3  & $9_{46}$ & 2  & $10_{39}$ & ? & $10_{144}$ & 3  \\\hline
			$9_{5}$ & 3  & $9_{47}$ & ?  & $10_{40}$ & ? & $10_{142}$ & 3  \\\hline
			$9_{6}$ & 3  & $9_{48}$ & 3  & $10_{41}$ & ?  & $10_{148}$ & 3 \\\hline
			$9_7$   &  3  &  $9_{49}$ & 3 & $10_{42}$ & ?  &   $10_{161}$ & 3 \\\hline
					
				\end{tabular}

\vspace{.5cm}
\caption{Known quadruple crossing numbers.}
\label{quadtable}
\end{center}
\end{table}


\begin{thebibliography}{99}
\newcommand{\au}[1]{{#1},}
\newcommand{\rawti}[1]{\textit{#1}}
\newcommand{\ti}[1]{\rawti{#1},}
\newcommand{\jo}[1]{{#1}}
\newcommand{\vo}[1]{\textbf{#1}}
\newcommand{\yr}[1]{(#1),}
\newcommand{\pp}[1]{#1.}
\newcommand{\pub}[1]{#1,}
\newcommand{\ppc}[1]{#1,}
\newcommand{\pps}[1]{#1;}
\newcommand{\bk}[1]{{#1},}
\newcommand{\inbk}[1]{in {#1}}
\newcommand{\plain}[1]{#1}
\newcommand{\xxx}[1]{{arXiv:#1}}


\bibitem[1]{Adams}
\au{C. Adams}
\ti{Triple crossing number of knots and links} 
\jo{ArXiv: 1207.7332, to appear in the Journal of Knot Theory and its Ramifications}
\yr{2012}

\bibitem[2]{Ad}
\au{C. Adams}
\au{T. Crawford}
\au{B. DeMeo}
\au{M. Landry}
\au{A. Lin}
\au{M. Montee}
\au{S. Park}
\au{S. Venkatesh}
\au{F. Yhee}
\ti{Knot projections with a single multi-crossing}
\jo{ArXiv:1208.5742}
\yr{2012}

\bibitem[3]{Jones}
\au{V. F. R. Jones}
\ti{Hecke algebra representations of braid groups and link polynomials} 
\jo{Ann. of Math.} 
\vo{126} 
\yr{1987}
\pp{335Ð388} 

\bibitem[3]{Kau}
\au{L. Kauffman}
\ti{New invariants in the theory of knots}
\jo{Amer. Math. Monthly}
\vo{95}
\yr{1988}
\pp{195Ð242}

 \bibitem[4]{Muras}
 \au{K. Murasugi}
 \ti{Jones polynomials of alternating links}
 \jo{ Trans. Amer. Math. Soc.}
 \vo{295}
 \yr{1986}
 \pp{147Ð174}

\bibitem[5]{PT}
\au{J. Pach}
\au{G. To\'th}
\ti{Degenerating crossing numbers} 
\jo{Discrete Comput. Geom.}
\vo{41} 
\yr{2009}
\pp{376Ð384} 


\bibitem[6]{TT}
\au{H.Tanaka}
\au{M. Teragaito}
\ti{Triple crossing numbers of graphs} 
\jo{ArXiv:1002.4231}
\yr{2010}

\bibitem[7] {Th}
\au{M. Thistlethwaite}
\ti{A spanning tree expansion of the Jones polynomial} 
\jo{Topology}
\vo{26}
\yr{1987}
\pp{297--309} 


\end{thebibliography}
\end{document}